\newcommand*\patchAmsMathEnvironmentForLineno[1]{
  \expandafter\let\csname old#1\expandafter\endcsname\csname #1\endcsname
  \expandafter\let\csname oldend#1\expandafter\endcsname\csname end#1\endcsname
  \renewenvironment{#1}
  {\linenomath\csname old#1\endcsname}
  {\csname oldend#1\endcsname\endlinenomath}}
  \newcommand*\patchBothAmsMathEnvironmentsForLineno[1]{
  \patchAmsMathEnvironmentForLineno{#1}
  \patchAmsMathEnvironmentForLineno{#1*}}
\title[Conformal Gauss-Bonnet for Four-Manifolds with Corner]{Extrinsic Curvature and Conformal Gauss-Bonnet for Four-Manifolds with Corner}
\author{Stephen E. McKeown}
\address{Department of Mathematical Sciences, FO 35; University of Texas at Dallas; 800 W. Campbell Road; Richardson, TX 75080; USA}
\email{stephen.mckeown@utdallas.edu}
\thanks{Research partially supported by NSF RTG Grant DMS-1502525.}
\subjclass[2010]{Primary 53C40, 53C18; Secondary 58J99.}
\date{\today}
\begin{document}

\maketitle

\begin{abstract}
	This paper defines two new extrinsic curvature quantities on the corner of a four-dimensional Riemannian manifold with corner. One of these is a pointwise conformal invariant, and the conformal
	transformation of the other is governed by a new linear second-order pointwise conformally invariant partial differential operator. The Gauss-Bonnet theorem is then stated in terms of these quantities.
\end{abstract}

\section{Introduction}

The extension of the Gauss-Bonnet theorem to manifolds of higher even dimension than two was one of the great achievements of mid-twentieth century geometry. (See \cite{f40,a42,aw43,c44,c45} and
the classic exposition \cite[Volume V]{spi99}.) Nevertheless, the Pfaffian is a distinctly
more complicated object than the Gaussian curvature, and mining the theorem for its geometric consequences has long been a challenging task.

One successful application in dimension four has been the use of the theorem to identify and analyze conformally invariant differential operators and associated curvature quantities. It was first observed in
\cite{bg94} that the Gauss-Bonnet formula on a closed Riemannian four-manifold $(X,g)$ could be rewritten as
\begin{equation*}
	4\pi^2\chi(X) = \oint_X \left( \frac{1}{8}|W_g|_g^2 + \frac{1}{2}Q \right)dv_g,
\end{equation*}
where $W$ is the Weyl curvature tensor and $Q$ is the so-called $Q$-curvature (see \cite{bo91}). 
The first term in the integrand yields a pointwise conformal invariant, so this furnishes a proof that $\oint_X Q dv_g$ is a global conformal
invariant. One can also calculate that, under the conformal change $\tilde{g} = e^{2\omega}g$, the $Q$-curvature transforms pointwise by
\begin{equation}
	\label{QPeq}
	e^{4\omega}\widetilde{Q} = Q + P_4\omega,
\end{equation}
where $P_4$ is the Paneitz operator, a linear elliptic fourth-order PDO that satisfies the pointwise conformal invariance property $\widetilde{P}_4 = e^{-4\omega}P_4$. Thus,
this form of the Gauss-Bonnet formula offers a tractable link between conformal geometry and topology. The relationship between the Gauss-Bonnet formula
and $Q$-curvature motivated much important geometric analysis, including Alexakis's structure theorem on global conformal invariants (\cite{a12}), the conformally invariant sphere theorem \cite{cgy03},
and the topological result in \cite{gursk99}, among numerous others.

Motivated by the work of Branson and Gilkey, Chang and Qing in \cite{cq97i} showed that the Chern-Gauss-Bonnet formula for compact Riemannian
four-manifolds with boundary can also be written in a form that interacts
in a particularly nice way with conformal geometry, and in so doing, they identified important new boundary curvature quantities. To be specific, let
$(X^4,g)$ be such a manifold with boundary $M^3 = \partial X$ and induced metric $h = g|_{TM}$. The result of Chang and Qing was that the Gauss-Bonnet formula in this setting can be written
\begin{equation}
	\label{GBB}
	4\pi^2\chi(X) = \int_X \left( \frac{1}{8}|W_g|_g^2 + \frac{1}{2}Q \right)dv_g + \oint_M (\mathcal{L} + T) dv_h,
\end{equation}
where $\mathcal{L}$ is an extrinsic pointwise conformal invariant of weight $-3$ along the boundary, and $T$ is an extrinsic third-order curvature quantity along $M$ which, under conformal transformations,
transforms by
\begin{equation}
	\label{TPeq}
	e^{3\omega}\widetilde{T} = T + P_3\omega,
\end{equation}
where $P_3:C^{\infty}(X) \to C^{\infty}(M)$ is a linear third-order differential operator that is pointwise conformally invariant of weight $-3$. Both the curvature quantity $T$ and the formula (\ref{GBB}) have since
proved important in conformal geometry. For example, they were used in (\cite{cqy08}) to give an analysis of the renormalized volume on asymptotically hyperbolic Einstein manifolds; were used 
in \cite{n08} and \cite{n09} to study Escobar-Yamabe type problems
for $Q$-curvature on manifolds with boundary; yielded the important defect theorem for total $Q$-curvature in \cite{cqy00}; and have proven important in the analysis of compactness for conformally compact Einstein
metrics (see \cite{cgq18}). A Sobolev trace inequality was proved in \cite{ac18}. There have been numerous further applications besides. (See also \cite{j09}.)

In light of the importance of the $T/P_3$ pair, there has been interest in finding generalizations to other situations. For example, Case and Luo in \cite{cl19}, along with Gover and Peterson in \cite{gp18},
defined fifth-order conformally covariant boundary operators and associated fifth-order extrinsic boundary curvatures (as well as various lower orders) such that the operators parametrized conformal changes
in the curvatures, whose total boundary integral was conformally invariant and which, when paired with the sixth-order GJMS operator (the sixth-order generalization of the Paneitz operator) yield a symmetric pair. The
former paper also proves a Sobolev trace theorem.

In this paper, I extend the result of \cite{cq97i} in a different direction: I consider four-manifolds with corners of codimension two, and in particular, derive an order-two conformal curvature/operator pair along
the corner. I then show that the Gauss-Bonnet formula may be written on such a manifold in terms of these quantities and a pointwise conformal invariant.
The proof is based on the Allendoerfer-Weil formula of \cite{aw43}. One forthcoming application is discussed below; other expected applications are to extend theorems and arguments in the conformal geometry of
four-manifolds to situations with higher boundary codimension.

To state the results cleanly, we let $X$ be a compact four-manifold with corners, having two embedded boundary hypersurfaces
$M^3$ and $N^3$, whose common boundary $\Sigma^2 = M \cap N$ is a smooth closed surface. (Nothing prevents $X$ from having more than one corner component, or a mixture of closed and boundaried boundary hypersurfaces.
The modifications necessary in that case will be obvious, but it will be notationally much cleaner to consider this simpler situation.) We assume that $M$ and $N$
meet transversely, at an angle which may vary but which lies at each point between $0$ and $\pi$.

For any three-dimensional hypersurface, we let $h$ denote the induced metric, $\mu$ denote an inward-pointing unit vector, $L$ the second fundamental form computed with respect to this vector, and
$H = h^{\mu\nu}L_{\mu\nu}$ the mean curvature. For $M$ and $N$ respectively, we will decorate these various terms with a subscript to indicate the hypersurface intended; so, for example, we will have
$h_M, \mu_M, L_M$, and $H_M$.

The $Q$-curvature is defined on $X$ by
\begin{equation*}
	6Q = -\Delta_gR + R^2 - 3R^{ij}R_{ij},
\end{equation*}
where $R$ here denotes the scalar or Ricci curvatures of $g$ and the Laplacian is a negative operator. The $T$-curvature along a hypersurface, meanwhile, is defined by
\begin{align*}
	T &= -\frac{1}{12}\mu(R_g) - \mathring{L}^{\mu\nu}R_{\mu\nu}^g + \mathring{L}^{\mu\nu}R_{\mu\nu}^h - \frac{1}{2}H|\mathring{L}|_h^2 + \frac{2}{3}\mathring{L}^3\\
	&\quad+\frac{1}{6}HR_h - \frac{1}{27}H^3 - \frac{1}{3}\Delta_hH.
\end{align*}
The $\mathcal{L}$-curvature along a hypersurface is
\begin{equation*}
	\mathcal{L} = \mathring{L}^{\mu\nu}R_{\mu\nu}^g - 2\mathring{L}^{\mu\nu}R_{\mu\nu}^h + \frac{2}{3}H|\mathring{L}|_h^2 - \mathring{L}^3.
\end{equation*}
For ease of reference, we also record the formula
\begin{equation*}
		\begin{split}
			P_3^gu =& \frac{1}{2}\mu\Delta_gu - \Delta_{h}\mu(u) - H\Delta_hu - \mathring{L}^{\mu\nu}\nabla_{\mu}^h\nabla_{\nu}^hu - \frac{1}{3}H^{\mu}u_{\mu}\\
			&+ \left( \frac{1}{6}R_g - \frac{1}{2}R_h - \frac{1}{2}|\mathring{L}|_{h}^2 + \frac{1}{3}H^2 \right)\mu(u).
		\end{split}
\end{equation*}
The corresponding quantities on $M$ and $N$ will be denoted by $T_M, \mathcal{L}_M$, etc.

Now define $\theta_0 \in C^{\infty}(\Sigma)$ by allowing $\theta_0$, at each point $p \in \Sigma$, to be the angle at which $M$ and $N$ meet at $p$. That is, $\theta_0$ is defined by $0 < \theta_0 < \pi$ and
$\cos \theta_0 = -\langle \mu_M,\mu_N\rangle_g$. Let $k = g|_{T\Sigma}$, and let $\nu_M, \nu_N$ be the inward-pointing unit normal vectors to $\Sigma$ in $M$ and $N$, respectively. Let $II_M$, $II_N$ be the second fundamental forms
of $\Sigma$ regarded as a submanifold of $M$ or $N$, and computed with respect to $\nu_M,\nu_N$. Let $K$ be the Gaussian curvature of $\Sigma$.
Finally, let $\eta_M, \eta_N$ be the mean curvatures of $\Sigma$ regarded as a hypersurface in $M$ or $N$, respectively.

Define $U \in C^{\infty}(\Sigma)$ by
\begin{equation}
	\label{Ueq}
	U = (\pi - \theta_0)K - \frac{1}{4}\cot(\theta_0)(\eta_M^2 + \eta_N^2) + \frac{1}{2}\csc(\theta_0)\eta_M\eta_N - \frac{1}{3}(\nu_MH_M + \nu_NH_N).
\end{equation}
Now define $G \in C^{\infty}(\Sigma)$ by
\begin{equation}
	\label{Geq}
	G = \frac{1}{2}\cot(\theta_0)(|\mathring{II}_M|_k^2 + |\mathring{II}_N|_k^2) - \csc(\theta_0)\mathring{II}^M_{\alpha\beta}\mathring{II}_N^{\alpha\beta}.
\end{equation}

Finally, define a corner differential operator $P_2^b:C^{\infty}(X) \to C^{\infty}(\Sigma)$ by
\begin{equation}
	\label{P2eq}
\begin{split}
	P_2^bu =& (\theta_0 - \pi)\Delta_ku + \nu_M\mu_Mu + \nu_N\mu_Nu\\
	&+ \cot(\theta_0)(\eta_M\nu_Mu + \eta_N\nu_Nu) - \csc(\theta_0)(\eta_N\nu_Mu + \eta_M\nu_Nu)\\
	&+ \frac{1}{3}(H_M\nu_Mu + H_N\nu_Nu).
\end{split}
\end{equation}

We then have the following results.
\begin{theorem}
	\label{confchangethm}
	Let $(X,g)$ be a compact Riemannian manifold with codimension-two corner $\Sigma^2 = M \cap N$, where $M$ and $N$ are the two embedded hypersurfaces composing $\partial X$. We suppose that $M, N$ meet
	transversely and at positive angle less than $\pi$ (which angle may vary along $\Sigma$).
	Then under the conformal change $\tilde{g} = e^{2\omega}g$, we find
	\begin{align*}
		\widetilde{G} &= e^{-2\omega}G,\numberthis\label{gtranseq}\\
		\widetilde{P}_2^bu &= e^{-2\omega}P_2^bu,\numberthis\label{ptranseq}\\
		\intertext{and}
		e^{2\omega}\widetilde{U} &= U + P_2^b\omega\numberthis\label{vtranseq}
	\end{align*}
\end{theorem}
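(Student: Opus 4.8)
The plan is to assemble the three identities from the conformal transformation laws of the constituent quantities, so the first task is to record those laws. Since conformal changes preserve angles, $\tilde{\theta}_0=\theta_0$, and hence $\cot\tilde{\theta}_0=\cot\theta_0$, $\csc\tilde{\theta}_0=\csc\theta_0$. The $\tilde{g}$-unit vectors satisfy $\tilde{\mu}_M=e^{-\omega}\mu_M$, $\tilde{\nu}_M=e^{-\omega}\nu_M$, and likewise for $N$. The trace-free second fundamental forms scale by weight $+1$, namely $\widetilde{\mathring{II}}_M=e^{\omega}\mathring{II}_M$ and $\widetilde{\mathring{II}}_N=e^{\omega}\mathring{II}_N$ as $(0,2)$-tensors on $\Sigma$; contracting with $\tilde{k}=e^{2\omega}k$ then shows that each of the three scalars in \eqref{Geq} transforms with weight $-2$, which is exactly \eqref{gtranseq}. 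For the remaining two identities I will also use $\tilde{\eta}_M=e^{-\omega}(\eta_M-2\,\nu_M\omega)$ and $\tilde{H}_M=e^{-\omega}(H_M-3\,\mu_M\omega)$ (with signs fixed by the paper's inward-normal convention, and the analogous formulas on $N$), the fact that on the surface $\Sigma$ the Laplacian transforms with no lower-order term, $\tilde{\Delta}_k=e^{-2\omega}\Delta_k$, and the Gauss curvature law $e^{2\omega}\tilde{K}=K-\Delta_k\omega$.

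The one genuinely geometric input is the linear algebra of the corner. At each point $p\in\Sigma$ the two-plane normal to $\Sigma$ in $X$ carries the two orthonormal frames $\{\mu_M,\nu_M\}$ and $\{\mu_N,\nu_N\}$, related by $\nu_M=\cos\theta_0\,\nu_N+\sin\theta_0\,\mu_N$ and $\mu_M=\sin\theta_0\,\nu_N-\cos\theta_0\,\mu_N$, together with the mirror relations obtained on interchanging $M$ and $N$. Consequently, for any smooth function $f$, the four normal derivatives $\mu_M f,\nu_M f,\mu_N f,\nu_N f$ at $p$ obey these same linear relations, so every first-order normal quantity at $\Sigma$ may be resolved into the single frame $\{\mu_N,\nu_N\}$.

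To prove \eqref{ptranseq} I would expand $\widetilde{P}_2^b u$ term by term. The second-order-in-$u$ part is $e^{-2\omega}\big[(\theta_0-\pi)\Delta_k u+\nu_M\mu_M u+\nu_N\mu_N u\big]$, matching $e^{-2\omega}$ times the second-order part of $P_2^b u$ with nothing extra, since $\tilde{\Delta}_k=e^{-2\omega}\Delta_k$ has no lower-order correction in dimension two and the top-order part of $\tilde{\nu}_M(\tilde{\mu}_M u)=e^{-2\omega}\big(\nu_M\mu_M u-(\nu_M\omega)(\mu_M u)\big)$ is $e^{-2\omega}\nu_M\mu_M u$. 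The curvature terms reproduce themselves with weight $-2$ and additionally spin off pieces carrying the $(\nu\omega)$ and $(\mu\omega)$ corrections from $\tilde{\eta}$ and $\tilde{H}$. Collecting everything, one obtains $\widetilde{P}_2^b u=e^{-2\omega}P_2^b u+e^{-2\omega}E$, where $E$ is a sum of products of a single normal derivative of $\omega$ against a single normal derivative of $u$. The crux is that $E=0$: resolving each of $\nu_M\omega,\mu_M\omega,\nu_M u,\mu_M u$ into the $\{\mu_N,\nu_N\}$ frame via the relations above, every monomial cancels after using elementary identities such as $\cot^2\theta_0+1=\csc^2\theta_0$ and $\cos^2\theta_0-1=-\sin^2\theta_0$.

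For \eqref{vtranseq} I would likewise expand $e^{2\omega}\widetilde{U}$. The term $(\pi-\theta_0)e^{2\omega}\tilde{K}$ contributes $(\pi-\theta_0)K+(\theta_0-\pi)\Delta_k\omega$; the $\eta$-quadratic terms contribute their original form together with the linear pieces $\cot\theta_0(\eta_M\nu_M\omega+\eta_N\nu_N\omega)-\csc\theta_0(\eta_N\nu_M\omega+\eta_M\nu_N\omega)$ and the quadratic pieces $-\cot\theta_0\big[(\nu_M\omega)^2+(\nu_N\omega)^2\big]+2\csc\theta_0(\nu_M\omega)(\nu_N\omega)$; and $-\tfrac13(\nu_M\tilde{H}_M+\nu_N\tilde{H}_N)$, once the outer $\nu$ hits the $-3\mu\omega$ correction in $\tilde{H}$, contributes $-\tfrac13(\nu_M H_M+\nu_N H_N)+\tfrac13(H_M\nu_M\omega+H_N\nu_N\omega)+\nu_M\mu_M\omega+\nu_N\mu_N\omega-(\nu_M\omega)(\mu_M\omega)-(\nu_N\omega)(\mu_N\omega)$. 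Summing all of this, the terms that are not products of two first derivatives of $\omega$ reassemble precisely into $U+P_2^b\omega$, while the leftover $-\cot\theta_0\big[(\nu_M\omega)^2+(\nu_N\omega)^2\big]+2\csc\theta_0(\nu_M\omega)(\nu_N\omega)-(\nu_M\omega)(\mu_M\omega)-(\nu_N\omega)(\mu_N\omega)$ is a quadratic form in the normal derivatives of $\omega$ that vanishes identically, by the same frame-resolution-and-trigonometry argument used for $E$. I expect the main obstacle to be exactly these two cancellations — the vanishing of $E$ and of the residual quadratic form — since they hinge on the rotation relation between the two normal frames and on the precise coefficients in \eqref{Ueq} and \eqref{P2eq}, and a single sign slip in the transformation laws will destroy them.
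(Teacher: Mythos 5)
Your proposal is correct, and for \eqref{gtranseq} and \eqref{vtranseq} it follows essentially the same route as the paper: record the conformal transformation laws of $K,\eta,\mathring{II},H,\theta_0$, expand, and use the rotation relation $\mu_M=\csc(\theta_0)\nu_N-\cot(\theta_0)\nu_M$ (the paper's \eqref{muMeq}) to see that the residual quadratic form $-\cot\theta_0\bigl[(\nu_M\omega)^2+(\nu_N\omega)^2\bigr]+2\csc\theta_0(\nu_M\omega)(\nu_N\omega)-(\nu_M\omega)(\mu_M\omega)-(\nu_N\omega)(\mu_N\omega)$ vanishes — the paper explicitly remarks on this same cancellation between the ``Green terms'' $\nu_MH_M+\nu_NH_N$ and the Allendoerfer--Weil contributions. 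Where you genuinely diverge is \eqref{ptranseq}: you prove it by a second direct expansion, showing that the bilinear error term $E$ in $(\nabla\omega,\nabla u)$ cancels by the same frame-resolution argument, whereas the paper deduces \eqref{ptranseq} formally from \eqref{vtranseq} by the standard polarization trick (compare $\hat g=e^{2(\omega+\phi)}g=e^{2\phi}\tilde g$, write the $U$-transformation law both ways, subtract, and use linearity of $P_2^b$ in $u$, as in Proposition 3.1 of Chang--Qing). Your route costs an extra computation but is self-contained and makes visible \emph{why} the invariance holds (the same trigonometric cancellation twice); the paper's route is shorter and immune to coefficient slips, since it needs no second cancellation at all. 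I verified that your error term $E$ does vanish under the substitutions $\mu_M=\csc\theta_0\,\nu_N-\cot\theta_0\,\nu_M$ and $\mu_N=\csc\theta_0\,\nu_M-\cot\theta_0\,\nu_N$, so the direct argument goes through; just be aware that it depends delicately on the exact coefficients in \eqref{P2eq}, which is precisely the sensitivity the polarization argument avoids.
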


\begin{theorem}
	\label{gbthm}
	Let $X,M,Q,\Sigma,$ and $g$ satisfy the same hypotheses as Theorem \ref{confchangethm}. Then
	\begin{equation}
		\label{gbeq}
		\begin{split}
			4\pi^2\chi(X^4) =& \int_X\left( \frac{1}{8}|W_g|_g^2 + \frac{1}{2}Q \right)dv_g + \int_M(\mathcal{L}_M + T_M)dv_{h_M}\\
			&+ \int_N(\mathcal{L}_N + T_N)dv_{h_N}  + \oint_{\Sigma}(G + U)dv_k.
		\end{split}
	\end{equation}
\end{theorem}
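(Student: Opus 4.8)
The plan is to start from the Allendoerfer--Weil formula of \cite{aw43} for the Riemannian polyhedron $(X,g)$ and to reshape each of its three kinds of contributions --- the interior integral of the Pfaffian, the integrals over the two smooth boundary faces $M$ and $N$, and the integral over the codimension-two corner $\Sigma$ --- into the right-hand side of \eqref{gbeq}. With the normalization under which the closed case reads $4\pi^2\chi(X)=\int_X\mathrm{Pff}(g)\,dv_g$, where $\mathrm{Pff}(g)=\tfrac18|W_g|_g^2+\tfrac12 Q+\tfrac1{12}\Delta_g R_g$ by the standard dimension-four identity, that formula expresses $4\pi^2\chi(X)$ as this interior integral plus, for each of $M$ and $N$, the integral of a transgression density built polynomially from the ambient curvature and the second fundamental form of that hypersurface, plus the integral over $\Sigma$ of a density depending on the dihedral angle $\theta_0$, the Gaussian curvature $K$ of $\Sigma$, and the second fundamental forms of $\Sigma$ inside $M$ and inside $N$. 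In the classical derivation this corner density arises by integrating a curvature form over the arc of unit normal directions that points out of $X$ along $\Sigma$; that arc subtends angle $\pi-\theta_0$, which is exactly why $(\pi-\theta_0)$ multiplies $K$ in \eqref{Ueq} and why $\cot\theta_0$ and $\csc\theta_0$ appear in front of the quadratic curvature terms in \eqref{Ueq} and \eqref{Geq}.

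The interior term is disposed of as in the closed and boundary cases: since $\int_X\Delta_g R_g\,dv_g=-\oint_{\partial X}\mu(R_g)$ by the divergence theorem ($\mu$ the inward unit normal), the Pfaffian integral becomes the first integral in \eqref{gbeq} plus the boundary pieces $-\tfrac1{12}\int_M\mu_M(R_g)\,dv_{h_M}-\tfrac1{12}\int_N\mu_N(R_g)\,dv_{h_N}$. On each of $M$ and $N$ one then repeats, essentially verbatim, the computation of Chang and Qing \cite{cq97i}, which shows that the Allendoerfer--Weil hypersurface density together with this leftover $-\tfrac1{12}\mu(R_g)$ equals $\mathcal{L}+T$ up to a total $h$-divergence. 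The one genuinely new feature is that $M$ and $N$ here are not closed: each has boundary $\Sigma$. Hence the total divergence, which was invisible in \cite{cq97i}, now integrates to a term on $\Sigma$; likewise the term $-\tfrac13\Delta_h H$ inside $T$ contributes $\oint_\Sigma$-terms. All of these boundary-of-boundary contributions --- involving the normal derivatives $\nu_M(H_M),\nu_N(H_N)$ and contractions of the second fundamental forms of $\Sigma$ with ambient and induced curvatures --- must be collected carefully.

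The heart of the proof is the corner. One expands the Allendoerfer--Weil corner density and adds to it every $\Sigma$-integrand accumulated above, and shows the total equals $G+U$. This calls for a dictionary among the several second fundamental forms in play: the forms $II_M$ and $II_N$ of $\Sigma$ in $M$ and in $N$, the restrictions of $L_M$ and $L_N$ to $T\Sigma$, and the vector-valued second fundamental form of $\Sigma$ in $X$, whose normal plane is spanned by $\mu_M$ and $\mu_N$ (not orthonormal, their inner product being $-\cos\theta_0$) --- all linked through $\theta_0$, with $\eta_M,\eta_N$ entering as traces. One then repeatedly applies the structure equations (Gauss, Codazzi, and the normal-curvature equation) for the nested submanifolds $\Sigma\subset M\subset X$ and $\Sigma\subset N\subset X$, and in particular the Gauss equation for $\Sigma\subset X$ itself, which expresses $K$ through the ambient sectional curvature on $T\Sigma$ and quadratic terms in the second fundamental forms; combined with elementary identities for $\cot\theta_0,\csc\theta_0$, this is what lets the trace-free quadratic terms assemble into $G$ and the remaining terms --- $(\pi-\theta_0)K$, the $\eta$-quadratic terms, and the residual $\nu_MH_M,\nu_NH_N$ inherited from the divergences on $M$ and $N$ --- assemble into $U$.

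I expect the bookkeeping in this last step to be the main obstacle. One must make sure that every derivative of the (in general non-constant) angle $\theta_0$ along $\Sigma$ that gets generated ultimately cancels; that no ambient curvature component survives beyond what the Gauss equation folds into $K$; and that the normalizing constants $\mathrm{vol}(S^3),\mathrm{vol}(S^2),\mathrm{vol}(S^1)$ and the orientation conventions of the Allendoerfer--Weil formula are carried consistently across all faces of $X$. A useful final check is Theorem \ref{confchangethm}: since the left side of \eqref{gbeq} is a topological invariant while the conformal variations of $\tfrac18|W_g|_g^2+\tfrac12 Q$ on $X$ and of $\mathcal{L}_M+T_M$, $\mathcal{L}_N+T_N$ on $M,N$ are already known, the transformation laws \eqref{gtranseq}, \eqref{ptranseq}, \eqref{vtranseq} for $G$, $P_2^b$, and $U$ are forced to be exactly what makes the right side conformally invariant --- so agreement there corroborates the computation.
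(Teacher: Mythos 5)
Your plan follows the paper's proof essentially step for step: both start from the Allendoerfer--Weil formula, identify the interior Pfaffian as $\frac18|W_g|_g^2+\frac12 Q+\frac1{12}\Delta_g R_g$, reduce the face densities on $M$ and $N$ to $\mathcal{L}+T$ plus the divergences $\frac13\Delta_h H$ and $\frac1{12}\mu(R_g)$, and absorb the resulting $\nu_M H_M+\nu_N H_N$ terms on $\Sigma$ into $U$ after expanding the corner density over the outer-angle arc of length $\pi-\theta_0$ using the Gauss equation for $\Sigma\subset X$ and the relation $\mu_M=\csc(\theta_0)\nu_N-\cot(\theta_0)\nu_M$. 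The only caveats are minor: the $-\frac13\Delta_h H$ sitting inside $T$ does not generate a separate $\Sigma$-term (only the excess divergence in the Allendoerfer--Weil face density does, so be careful not to double-count), and no derivatives of $\theta_0$ ever arise because the corner computation is purely pointwise in the normal circle fiber.
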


Both theorems are proved in Section \ref{proofsec}.

In a companion paper in preparation, Matthew Gursky, Aaron Tyrrell, and I will use this to obtain a Gauss-Bonnet formula for asymptotically hyperbolic Einstein manifolds with minimal surfaces meeting the infinite boundary.
This generalizes the result of \cite{and03,cqy08} to the setting of \cite{gw99}.

\section{Background and Proof}
\subsection{The Allendoerfer-Weil Formula}
We begin with the theorem of Allendoerfer-Weil, proved in \cite{aw43}. The proof will simply involve a rewriting of what their formula says, so we take some time to introduce their notation, or a slightly
simplified version of it. The
setting of their theorem is a Riemannian polyhedron, or a compact Riemannian manifold with corners of arbitrary codimension, having non-reentrant angles.

Throughout the paper, the Laplacian $\Delta_g$ will be a negative operator, and the sign of the Riemman tensor will be such that $R_{ij} = g^{kl}R_{iklj}$.

Let $(M^n,g)$ be any Riemannian $n$-manifold, and $M^r$ an embedded $r$-dimensional submanifold, with induced metric $\gamma$.
Near a point in $M^r$, we choose coordinates $\left\{ x^{i} \right\}_{i = 1}^n$
on $M$, such that $\left\{ x^{\mu} \right\}_{\mu = 1}^r$ restrict to coordinates on $M^r$. Let $\sigma$ be a normal vector to $M^r$ in $M^n$.
Following \cite{aw43} (here and throughout this subsection), we define locally along $M^r$ the quantity
\begin{equation*}
	\Lambda_{\mu\nu}(\sigma) = -\sigma_{i}\Gamma_{\mu\nu}^i.
\end{equation*}
Thus, $\Lambda$ is essentially the negative of the second fundamental form in the $\sigma$ direction.

Now for $0 \leq 2p \leq r$ and for $\zeta \in M^r$, define
\begin{equation*}
	\begin{split}
		\Phi_{r,p}(\zeta,\sigma) =& (-1)^p\frac{1}{2^{2p}p!(r - 2p)!}(\det\gamma)^{-1} \varepsilon^{\mu_1\ldots \mu_r}\varepsilon^{\nu_1\ldots\nu_r}R_{\mu_1\mu_2\nu_1\nu_2}\cdots\\
		&\cdot R_{\mu_{2p-1}\mu_{2p}\nu_{2p - 1}\nu_{2p}}
		\Lambda_{\mu_{2p+1}\nu_{2p+1}}(\sigma)\cdots\Lambda_{\mu_r\nu_r}(\sigma).
	\end{split}
\end{equation*}
Here, $R$ is the curvature tensor of $g$ on $M^n$, while $\varepsilon$ is the Levi-Civita symbol. (Note that the factor $(-1)^p$ is not present in the original paper \cite{aw43},
but is here because I am using the opposite sign convention for the curvature tensor.)

For $\zeta \in M^r$, let $S_{\zeta}^{n - r - 1}$ be the unit normal sphere bundle in $N_{\zeta}M^r \subset T_{\zeta}M^n$. Let $\xi \in S_{\zeta}^{n - r - 1}$, and let
$d\xi$ be the area element on $S_{\zeta}^{n - r - 1}$. We define
\begin{equation*}
	\Psi(\zeta|M^r) = \frac{\pi^{-n/2}\Gamma\left( \frac{n}{2} \right)}{2}\sum_{p = 0}^{\lfloor \frac{r}{2}\rfloor}\frac{\Phi_{r,p}(\zeta,\xi)}{(n - 2)(n - 4)\cdots(n - 2p)}d\xi,
\end{equation*}
which is a volume form on $S_{\zeta}^{n - r - 1}$. Now, for $\zeta \in M^r$, let $\Gamma(\zeta) \subset S_{\zeta}^{n - r - 1}$ be the set of unit normal vectors that have
negative inner product with \emph{all} inward-pointing normal vectors (including normal vectors tangent to the boundary). 
This is a spherical cell, bounded by great spheres. We call $\Gamma(\zeta)$ the \emph{outer angle} at $\zeta$.

Meanwhile, on the interior of $M^n$, we define $\Psi \in \Omega^n(M^n)$ by
\begin{equation}
	\label{psieq}
	\Psi(z) = (-2\pi)^{-n/2}\frac{1}{2^n(n/2)!}\det(g)^{-1}\varepsilon^{i_1\ldots i_n}\varepsilon^{j_1\ldots j_n}R_{i_1i_2j_1j_2}\cdots R_{i_{n - 1}i_{n}j_{n - 1}j_n} dv_g,
\end{equation}
if $n$ is even, and by $\Psi = 0$ if $n$ is odd.

Let $P^n$ be a compact Riemannian manifold with corners of arbitrary codimension; we may write $\partial P = \cup_{r = 0}^{n - 1}\cup_{\lambda = 1}^{N_r}P_{\lambda}^r$, where
$P_{\lambda}^r$ is an $r$-dimensional manifold with corners, and if $\lambda \neq \lambda'$, then $P_{\lambda}^r \cap P_{\lambda'}^r$ contains only manifolds (with corner) of dimension less than $r$.

\begin{theorem}[\cite{aw43}, Theorem II]
	With $P^n$ and $\{P_{\lambda}^r\}$ as above, we have
	\begin{equation}
		\label{aweq}
		(-1)^n\chi'(P^n) = \int_{P^n}\Psi + \sum_{r = 0}^{n - 1}\sum_{\lambda}\int_{P_{\lambda}^r}\int_{\Gamma(\zeta)}\Psi(\zeta|P_{\lambda}^r)dv_{\gamma_{\lambda}^r}(\zeta).
	\end{equation}
\end{theorem}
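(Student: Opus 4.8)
Strictly speaking this is Theorem~II of \cite{aw43}, so for the present paper one simply invokes it; but the most illuminating way to \emph{prove} it is by Chern's transgression argument adapted to the corner stratification, which also makes transparent where the constants in $\Phi_{r,p}$, in $\Psi(\zeta\mid P_\lambda^r)$, and in \eqref{aweq} come from.

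First I would dispose of the closed case, which is Chern's theorem: the Pfaffian form $\Psi$ of \eqref{psieq}, pulled back along $\pi\colon S(TP^n)\to P^n$ to the unit tangent sphere bundle, is exact, $\pi^*\Psi = d\Pi$ for an explicit transgression form $\Pi$ built polynomially from the Levi-Civita connection and curvature forms; feeding in the normalized section $V/|V|$ of a vector field $V$ on $P^n$ with nondegenerate zeros and applying Stokes on $P^n$ with small balls removed around the zeros expresses $\int_{P^n}\Psi$ as the sum of the local indices, that is, as $\chi(P^n)$, up to the sign convention, which is absorbed into writing $(-2\pi)^{-n/2}$ in \eqref{psieq}. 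The entire remaining task is to identify the boundary contributions of this Stokes argument, in the presence of corners, with the double sum in \eqref{aweq}.

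Next I would arrange $V$ to be adapted to the stratification: nondegenerate interior zeros, and near each face $P_\lambda^r$ a model in which $V/|V|$, as one approaches the stratum, sweeps out exactly the outer-angle cell $\Gamma(\zeta)\subset S^{n-r-1}_\zeta$ — this is possible by patching radial models on nested collar neighborhoods of the strata in order of increasing codimension. Running the Stokes argument for $d\Pi$ on the sphere bundle over $P^n$ with tubes removed around the zeros \emph{and} around the lower strata, the surviving boundary contribution over $P_\lambda^r$ converges, as the tubes shrink, to $\int_{P_\lambda^r}\bigl(\int_{\Gamma(\zeta)}\Pi|_{\text{normal fiber}}\bigr)dv_{\gamma_\lambda^r}$. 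One then computes that fiber restriction: along $P_\lambda^r$ the relevant part of $\Pi$ depends only on the tangential curvature $R_{\mu\nu\sigma\tau}$ of $P^n$ and on $\Lambda_{\mu\nu}(\xi)=-\xi_i\Gamma^i_{\mu\nu}$, and the successive fiber integrals over normal spheres of dimensions $n-1, n-3, \ldots$ generate precisely the denominators $2^{2p}p!(r-2p)!$ and $(n-2)(n-4)\cdots(n-2p)$ via standard sphere-volume and Gegenbauer identities, while the prefactor $\pi^{-n/2}\Gamma(n/2)/2$ is half the volume of $S^{n-1}$, inverted; the factor $(-1)^p$ in $\Phi_{r,p}$ is exactly what converts the convention $R_{ij}=g^{kl}R_{iklj}$ used here to the opposite one of \cite{aw43}. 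This reproduces $\Psi(\zeta\mid P_\lambda^r)$ term by term, and the overall sign $(-1)^n$ in front of $\chi'(P^n)$ is fixed by matching the closed case and Poincar\'e--Lefschetz duality.

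The main obstacle is the corner bookkeeping: proving that the limiting boundary contributions from strata of \emph{different} dimensions fit together into the single sum $\sum_{r}\sum_\lambda$ with no overlap, and that the spherical region surviving the limit over $P_\lambda^r$ is exactly the outer angle $\Gamma(\zeta)$ — the set of unit normals with negative inner product against \emph{all} inward-pointing normals, including those tangent to $\partial P$ — and not a larger or smaller cell. This is precisely the inclusion--exclusion over the face poset that forms the technical core of \cite{aw43}. A cleaner alternative, and essentially the route Allendoerfer and Weil take, is combinatorial from the start: triangulate $P^n$ finely, realize each closed simplex locally isometrically in a Euclidean space (after refinement, cf.\ \cite{aw43}), apply the classical Gauss--Bonnet--Chern formula with exterior-angle defects to each piece, and sum, noting that the contributions of interior faces cancel in pairs while those of the genuine boundary faces organize, again by inclusion--exclusion, into the outer-angle integrals appearing in \eqref{aweq}.
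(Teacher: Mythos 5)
The paper offers no proof of this statement at all: it is quoted verbatim (up to the sign adjustment $(-1)^p$ in $\Phi_{r,p}$ accounting for the convention $R_{ij}=g^{kl}R_{iklj}$) as Theorem II of \cite{aw43}, and your opening remark that one ``simply invokes it'' is exactly what the paper does. So there is nothing to compare your sketch against within the paper itself; the only question is whether your outline of a proof is sound, and at the level of a roadmap it is. Two caveats are worth recording. First, your transgression route is really Chern's later (1944--45) intrinsic argument extended to corners; the step you describe as ``arrange $V$ so that $V/|V|$ sweeps out exactly the outer-angle cell $\Gamma(\zeta)$'' together with the inclusion--exclusion over the face poset is not a technicality but the entire content of the corner case, and as written it is an assertion rather than an argument --- in particular the claim that the fiber integrals ``generate precisely the denominators'' would need to be carried out to confirm the normalization $\pi^{-n/2}\Gamma(n/2)/2$ and the factors $(n-2)(n-4)\cdots(n-2p)$ in $\Psi(\zeta|P_\lambda^r)$. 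Second, your description of the ``route Allendoerfer and Weil take'' is anachronistic: in 1943 there was no intrinsic Gauss--Bonnet--Chern formula to apply to each simplex. Their argument instead locally isometrically embeds each cell in Euclidean space via Cartan--Janet (which requires real-analytic metrics, hence an approximation argument that is itself a nontrivial part of their proof) and applies the extrinsic Allendoerfer--Fenchel result coming from Weyl's tube formula, before summing with the inner/outer angle inclusion--exclusion. Since the present paper treats \eqref{aweq} as a black box, none of this affects its correctness, but if you intend your sketch as a proof rather than a pointer to \cite{aw43}, both of these gaps would need to be filled.
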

The quantity $\chi'(P^n)$ is the \emph{interior} Euler characteristic, i.e., the Euler characteristic computed using only fully interior cells. For a four-manifold with corners, it coincides with the usual Euler characteristic.
(See \cite{c45} and pp. 154-55 of \cite{l56}. The distinction is important in \cite{aw43} because that paper considers Riemannian polyhedra more general than manifolds with corner.)

\subsection{Proofs of Theorems}
\label{proofsec}
To prove the Gauss-Bonnet formula, we rewrite the terms appearing in (\ref{aweq}) in terms of more geometric quantities. 

\begin{proof}[Proof of Theorem \ref{gbthm}] First, let us set some notation. Recall that we are working on a compact four-manifold
$X^4$ with $\partial X = M^3 \cup N^3$, where $M, N$ are embedded submanifolds with boundary and $\partial M = M \cap N = \partial N = \Sigma^2$, a closed embedded submanifold of (co)dimension two.
We will use indices $1 \leq i, j \leq 4$ on $X$, with $1 \leq \mu,\nu \leq 3$ on $M$ or $N$, and $1 \leq \alpha, \beta \leq 2$ on $\Sigma$.

Let us begin by briefly computing $\Psi(z)$; although the answer is of course the best-known among the quantities above, it will be useful to work through the computation and
to get the constants right. For any metric $g$, a well-known formula tells us that
\begin{equation*}
	(\det g)^{-1}\varepsilon^{i_1\ldots i_n}\varepsilon^{j_1\ldots j_n} = \det\left( g^{i_kj_l} \right)_{k,l=1}^n.
\end{equation*}
Taking $n = 4$, a straightforward calculation shows that
\begin{equation*}
	\varepsilon^{i_1\ldots i_4}\varepsilon^{j_1\ldots j_4}R_{i_1i_2j_1j_2}^gR_{i_3i_4j_3j_4}^g = 4R_g^2 - 16R_g^{ij}R_{ij}^g + 4R^{ijkl}_gR_{ijkl}^g.
\end{equation*}
Now, again for $n = 4$, we define the Schouten tensor $P_{ij}^g$ by
\begin{equation*}
	P_{ij}^g = \frac{1}{2}R_{ij} - \frac{1}{12}R_gg_{ij},
\end{equation*}
and define $J$ by
\begin{equation*}
	J = g^{ij}P^g_{ij}.
\end{equation*}
Recall next that the Riemann curvature tensor of $g$ satisfies
\begin{equation*}
	R_{ijkl}^g = W_{ijkl}^g + P_{il}g_{jk} + P_{jk}g_{il} - P_{ik}g_{jl} - P_{jl}g_{ik},
\end{equation*}
where $W$ is the Weyl tensor. Putting all these and (\ref{psieq}) together, we finally obtain
\begin{equation*}
	\Psi(z) = (2\pi)^{-2}\left[ \frac{1}{8}|W_g|_g^2 - |P_g|_g^2 + J_g^2 \right]dv_g.
\end{equation*}
But $Q = 2(J_g^2 - |P_g|_g^2) - \frac{1}{6}\Delta_gR_g$, so we obtain
\begin{equation}
	\label{xinteq}
	\int_X\Psi(z) = \frac{1}{4\pi^2}\int_X\left( \frac{1}{8}|W_g|_g^2 + \frac{1}{2}Q + \frac{1}{12}\Delta_gR_g  \right) dv_g.
\end{equation}

We next turn to computing $\Psi_{3,0}$ along $M^3 \subset X$. (We choose $M$ just for definiteness -- the formula of course will also hold along $N$.) Let, for now, $h = g|_{TM}$.
For $\sigma \in NM$ and $X,Y \in TM$ all at the same point, let
$L_M(\sigma)(X,Y) = \langle\nabla_X^gY,\sigma\rangle_g$ be the second fundamental form (which of course does not depend on the extension of $Y$ chosen). Thus, $\Lambda_{ij}(\sigma) = -L(\sigma)_{ij}$.
Now, recall that for any $3\times 3$ symmetric matrix $A$,
\begin{equation*}
	\det(A) = \frac{1}{6}\left[ \tr(A)^3 - 3\tr(A)|A|^2 + 2\tr(A^3) \right].
\end{equation*}
(This may be proved, for example, by diagonalizing.) We thus get, along $M$,
\begin{align*}
	\Psi_{3,0}(\zeta,\sigma) &= \frac{1}{6\det(h)}\varepsilon^{i_1i_2i_3}\varepsilon^{j_1j_2j_3}\Lambda_{i_1j_1}(\sigma)\Lambda_{i_2j_2}(\sigma)\Lambda_{i_3j_3}(\sigma)\\
	&= \frac{1}{\det(h)}\det(\Lambda_{ij}(\sigma))\\
	&= -\det(h^{-1}L(\sigma))\\
	&= \frac{1}{6}\left[ -\tr_h(L)^3 + 3\tr_h(L)|L|_h^2 + 2\tr_h(L^3) \right]\\
	&= \frac{1}{6}H|\mathring{L}|_h^2 - \frac{1}{3}\tr_h(\mathring{L}^3) - \frac{1}{27}H^3,
\end{align*}
where $H = h^{\mu\nu}L_{\mu\nu}$, $\mathring{L}$ is the tracefree part of $L$, and the last step follows by expanding the previous line.

Next we consider $\Psi_{3,1}$ along $M^3$. Recalling that $\Lambda = -L$, we apply Gauss's formula and find
\begin{align*}
	\Phi_{3,1}(\zeta,\sigma) &= \frac{1}{4}\left[ h^{\mu_1\nu_1}h^{\mu_2\nu_2}h^{\mu_3\nu_3}-h^{\mu_1\nu_1}h^{\mu_3\nu_2}h^{\mu_2\nu_3} - h^{\mu_1\nu_2}h^{\mu_2\nu_1}h^{\mu_3\nu_3} \right.\\
	&\quad\left. + h^{\mu_1\nu_2}h^{\mu_3\nu_1}h^{\mu_2\nu_3} + h^{\mu_1\nu_3}h^{\mu_2\nu_1}h^{\mu_3\nu_2} - h^{\mu_1\nu_3}h^{\mu_3\nu_1}h^{\mu_2\nu_2}\right]R_{\mu_1\mu_2\nu_1\nu_2}^hL_{\mu_3\nu_3}\\
	&\quad -\frac{1}{4}\varepsilon^{\mu_1\mu_2\mu_3}\varepsilon^{\nu_1\nu_2\nu_3}(L_{\mu_1\nu_2}L_{\mu_2\nu_1}L_{\mu_3\nu_3} - L_{\mu_1\nu_1}L_{\mu_2\nu_2}L_{\mu_3\nu_3})\\
	&=\frac{1}{4}\left[ -HR_h + L^{\mu\nu}R_{\mu\nu}^h - HR_h + 3L^{\mu\nu}R_{\mu\nu}^h \right] + 3\det(L)\\
	&= \mathring{L}^{\mu\nu}R_{\mu\nu}^h - \frac{1}{6}HR_h + \tr_h(\mathring{L}^3) + \frac{1}{9}H^3 - \frac{1}{2}H|\mathring{L}|_h^2.
\end{align*}
(Throughout, $L$ is computed with respect to the unit normal vector $\sigma$ -- of which, for a three-manifold embedded in a four-manifold, there are only two choices.)
Therefore,
\begin{align*}
	\Psi(\zeta|M^3) &= \frac{\pi^{-2}\Gamma(2)}{2}(\Phi_{3,0} + \frac{1}{2}\Phi_{3,1})d\xi\\
	&= \frac{1}{2\pi^2}\left[ \frac{1}{2}\mathring{L}^{\mu\nu}(\xi)R_{\mu\nu}^h - \frac{1}{12}H(\xi)R_h - \frac{1}{12}H(\xi)|\mathring{L}(\xi)|_h^2\right.\\
	&\quad+ \left.\frac{1}{54}H(\xi)^3 + \frac{1}{6}\tr_h(\mathring{L}(\xi)^3) \right]d\xi.
\end{align*}
The measure $d\xi$ here is actually a measure on the $0$-sphere, but we include the $\xi$'s to remind the reader of the dependence of this formula on a choice of normal vector. The only
element $\xi \in \Gamma(\zeta)$ is $-\mu_M$; consequently, we find
\begin{equation}
	\label{minteq}
	\int_M\int_{\Gamma(\zeta)}\Psi(\zeta|M) dv_h(\zeta) = \frac{1}{4\pi^2}\int_M(T_M + \mathcal{L}_M + \frac{1}{3}\Delta_{h_M}H_M + \frac{1}{12}\mu_M(R_g))dv_{h_M}.
\end{equation}
Of course the integral along $N$ is analogous.

We now consider the integrand along $\Sigma^2$. Let $k = g|_{T\Sigma}$. For a normal vector $\sigma \in N\Sigma$, we let $L_{\Sigma}(\sigma)$ be the corresponding second fundamental form. (When
appropriate in index notation, we will write the $\Sigma$ up instead of down.) That is, $L(\sigma)(X,Y) = \langle \nabla_X^gY,\sigma\rangle_g$. We write $II$ for the
\emph{vector} second fundamental form, which is to say, $II(X,Y) = (\nabla_X^gY)^{\perp}$. We also write $II_M(X,Y) = L(\nu_M)(X,Y)$ and $II_N(X,Y) = L(\nu_N)(X,Y)$. 

First, we find
\begin{align*}
	\Phi_{2,0}(\zeta,\sigma) &= \frac{1}{2\det(k)}\varepsilon^{\alpha_1\alpha_2}\varepsilon^{\beta_1\beta_2}\Lambda_{\alpha_1\beta_1}(\sigma)\Lambda_{\alpha_2\beta_2}(\sigma)\\
	&= \det(k^{-1}L_{\Sigma}(\sigma))\\
	&= \frac{1}{2}(H_{\Sigma}(\sigma)^2 - |L_{\Sigma}(\sigma)|_k^2).
\end{align*}
Here $H_{\Sigma}(\sigma) = k^{\alpha\beta}L_{\Sigma}(\sigma)_{\alpha\beta}$.

Next, we find
\begin{align*}
	\Phi_{2,1}(\zeta,\sigma) &= -\frac{1}{4\det(k)}\varepsilon^{\alpha_1\alpha_2}\varepsilon^{\beta_1\beta_2}R_{\alpha_1\alpha_2\beta_1\beta_2}^g\\
	&= -\frac{1}{\det(k)}R_{1212}^g\\
\end{align*}
By Gauss's equation, we have
\begin{align*}
	R_{1212}^g &= R_{1212}^k - \langle II(\partial_1,\partial_2),II(\partial_1,\partial_2)\rangle + \langle II(\partial_1,\partial_1),II(\partial_2,\partial_2)\rangle\\
	&= R_{1212} - II_{12i}II_{12}^i + II_{11i}II_{22}^i.
\end{align*}
Now let $\mu_M$ be the unit normal to $M$ in $X$, and let $\nu_M$ be the unit normal to $\Sigma$ in $M$. Together, these vectors form an orthonormal basis at each point for $N\Sigma$. Thus, we find
\begin{align*}
	\Phi_{2,1}(\zeta,\sigma) &= K - \det(L_{\Sigma}(\nu_M)) - \det(L_{\Sigma}(\mu_M))\\
	&= K - \frac{1}{2}(H_{\Sigma}(\nu_M)^2 + H_{\Sigma}(\mu_M)^2 - |L_{\Sigma}(\nu_M)|_k^2 - |L_{\Sigma}(\mu_M)|_k^2.
\end{align*}
(Recall that $K$ is the Gaussian curvature of $\Sigma$.)
Thus,
\begin{align*}
	\Psi(\zeta|\Sigma^2) &= \frac{1}{2\pi^2}\left[ \Phi_{2,0} + \frac{1}{2}\Phi_{2,1} \right]\\
	&= \frac{1}{2\pi^2}\left[ \frac{1}{2}H_{\Sigma}(\xi)^2 - \frac{1}{2}|L_{\Sigma}(\xi)|_{k}^2 + \frac{1}{2}K - \frac{1}{4}H_{\Sigma}(\nu_M)^2 - \frac{1}{4}H_{\Sigma}(\mu_M)^2\right.\\
	&\quad\left. + \frac{1}{4}|L_{\Sigma}(\nu_M)|_k^2 + \frac{1}{4}|L_{\Sigma}(\mu_M)|_k^2\right]d\xi.
\end{align*}
Here, $d\xi$ is the measure on the circle $S^1$, viewed as the fiber of the unit normal bundle $S^1N\Sigma$ to $\Sigma$ at $\zeta$.

Now, any $\xi \in S^1N\Sigma$ can be written $\xi = \cos(\theta)\nu_M + \sin(\theta)\mu_M$. The outer angle $\Gamma(\zeta)$ can then be written
\begin{equation*}
	\Gamma(\zeta) = \left\{ \cos(\theta)\nu_M + \sin(\theta)\mu_M: \theta_0(\zeta) + \frac{\pi}{2} < \theta < \frac{3\pi}{2} \right\}.
\end{equation*}

The contribution to the Gauss-Bonnet formula from $\Sigma_2$ will therefore be
\begin{equation*}
\begin{split}
	I_{\Sigma} :=& \frac{1}{4\pi^2}\oint_{\Sigma}\int_{\theta_0(\zeta) + \frac{\pi}{2}}^{\frac{3\pi}{2}}\left( K - \frac{1}{2}H_{\Sigma}(\nu_M)^2 + \frac{1}{2}H_{\Sigma}(\mu_M)^2 + \frac{1}{2}|L_{\Sigma}(\nu_M)|_k^2\right.\\
		&+\frac{1}{2}|L_{\Sigma}(\mu_M)|_k^2
	+ H_{\Sigma}(\cos(\theta)\nu_M + \sin(\theta)\mu_M)^2\\
	&\left.- |L_{\Sigma}(\cos(\theta)\nu_M+\sin(\theta)\mu_M)|_k^2\right)d\theta dv_k(\zeta)
\end{split}
\end{equation*}
Observe that, by bilinearity of $g$
\begin{align*}
	H_{\Sigma}(\cos(\theta)\nu_M + \sin(\theta)\mu_M)^2 &= \cos^2(\theta)H_{\Sigma}(\nu_M) + 2\sin(\theta)\cos(\theta)H_{\Sigma}(\nu_M)H_{\Sigma}(\mu_M)\\
	&\quad+ \sin^2(\theta)H_{\Sigma}(\mu_M),\\
	\intertext{and}
	|L_{\Sigma}(\cos(\theta)\nu_M + \sin(\theta)\mu_M)|_k^2 &= \cos^2(\theta)|L_{\Sigma}(\nu_M)|_k^2\\
	&\quad+ 2\sin(\theta)\cos(\theta)L_{\Sigma}(\nu_M)_{\alpha\beta}L_{\Sigma}(\mu_M)^{\alpha\beta}\\
	&\quad +\sin^2(\theta)|L_{\Sigma}(\mu_M)|_k^2.
\end{align*}
Now,
$\int_{\theta_0+\frac{\pi}{2}}^{\frac{3\pi}{2}}\cos^2(\theta)d\theta = \frac{1}{2}(\pi - \theta_0) + \frac{\sin(2\theta_0)}{4}$,
$\int_{\theta_0+\frac{\pi}{2}}\sin^2(\theta)d\theta =\frac{1}{2}(\pi-\theta_0)-\frac{\sin(2\theta_0)}{4},\text{ and}$, and
$\int_{\theta_0+\frac{\pi}{2}}^{\frac{3\pi}{2}}\sin(\theta)\cos(\theta)d\theta = \frac{1}{4}-\frac{1}{4}\cos(2\theta_0)$.
Also,
\begin{equation*}
	L_{\Sigma}(\xi)_{\alpha\beta}L_{\Sigma}(\tau)^{\alpha\beta} = \mathring{L}_{\Sigma}(\xi)_{\alpha\beta}\mathring{L}_{\Sigma}(\tau)^{\alpha\beta} + \frac{1}{2}H_{\Sigma}(\xi)H_{\Sigma}(\tau).
\end{equation*}
Next, we can write $\nu_N = \cos(\theta_0)\nu_M + \sin(\theta_0)\mu_M$, so 
\begin{equation}
	\label{muMeq}
	\mu_M = \csc(\theta_0)\nu_N - \cot(\theta_0)\nu_M.
\end{equation}
Finally, we have $\mathring{L}_{\Sigma}(\nu_M) = \mathring{II}_M$, and $\mathring{L}_{\Sigma}(\nu_N) = \mathring{II}_N$, while
$H_{\Sigma}(\nu_M) = \eta_M$ and $H_{\Sigma}(\nu_N) = \eta_N$.
Applying all these identities, we get
\begin{align}
	\label{siginteq}
	I_{\Sigma} &= \frac{1}{4\pi^2}\oint_{\Sigma}\left((\pi - \theta_0)K - \frac{1}{4}\cot(\theta_0)(\eta_M^2 + \eta_N^2)\right.\\
	&\quad+ \left.\frac{1}{2}\csc(\theta_0)\eta_M\eta_N + \frac{1}{2}\cot(\theta_0)(|\mathring{II}_M|_k^2 + |\mathring{II}_N|_k^2) - \csc(\theta_0)\mathring{II}_{\alpha\beta}^M\mathring{II}^{\alpha\beta}_N\right)dv_k\\
	&=\frac{1}{4\pi^2}\oint_{\Sigma}\left(U + G + \frac{1}{3}\left(\nu_MH_M + \nu_NH_N\right)\right)dv_k,
\end{align}
where $U, G$ are as in (\ref{Ueq}),(\ref{Geq}).

Now applying (\ref{xinteq}), (\ref{minteq}), (\ref{siginteq}), and Green's theorem yields (\ref{gbeq}), and thus the theorem.
\end{proof}

One observes that, in order to write the Chern-Gauss-Bonnet formula in terms of quantities $Q$ and $T$ that would satisfy (\ref{QPeq}), (\ref{TPeq}), it was necessary for Chang and Qing to
add divergence terms both to the interior and boundary integrals (as well as a correction term at the boundary, via Green's theorem). On the other hand, as Theorem \ref{confchangethm} and the above calculation shows, 
there is no need to add any divergence to the corner integral (though we still need the Green term); this is fortuitous, as there is no obvious second-order divergence to add!

\begin{proof}[Proof of Theorem \ref{confchangethm}]
	Suppose that $\tilde{g} = e^{2\omega}g$, where $\omega \in C^{\infty}(X)$. Recall the following standard formulas along $\Sigma$:
	\begin{align*}
		\widetilde{K} &= e^{-2\omega}(K - \Delta_k\omega)\\
		\widetilde{\eta}_M &= e^{-\omega}(\eta_M - 2\nu_M\omega)\\
		\widetilde{\eta}_N &= e^{-\omega}(\eta_N - 2\nu_N\omega)\\
		\mathring{\widetilde{II}}_M &= e^{\omega}\mathring{II}_{M}\\
		\mathring{\widetilde{II}}_N &= e^{\omega}\mathring{II}_N\\
		\widetilde{H}_M &= e^{-\omega}(H - 3\mu_M\omega)\\
		\widetilde{H}_N &= e^{-\omega}(H - 3\mu_N\omega)\\
		\tilde{\theta}_0 &= \theta_0.
	\end{align*}
	Using these and equation (\ref{Geq}), (\ref{gtranseq}) follows easily.

	We show (\ref{vtranseq}) explicitly. Using (\ref{Ueq}), we find
	\begin{align*}
		\widetilde{U} &= (\pi - \theta_0)e^{-2\omega}(K_{\Sigma} - \Delta_k\omega) - \frac{1}{4}\cot(\theta_0)e^{-2\omega}\left( \eta_M^2 - 4\eta_M\nu_M\omega + 4\eta_M(\omega)^2\right.\\
		&\quad+ \left.\eta_N^2 - 4\eta_N\nu_N\omega + 4\eta_N(\omega)^2\right)\\
		&\quad + \frac{1}{2}\csc(\theta_0)e^{-2\omega}\left( \eta_M\eta_N - 2\eta_M\nu_N\omega - 2\eta_N\nu_M\omega + 4\eta_M(\omega)\eta_N(\omega) \right)\\
		&\quad- \frac{1}{3}e^{-\omega}\nu_M(e^{-\omega}(H_M - 3\mu_M\omega))
		- \frac{1}{3}e^{-\omega}\nu_N(e^{-\omega}(H_N - 3\mu_N\omega));\\
		\intertext{Now applying (\ref{muMeq}) to the last two terms, we find}
		\widetilde{U} &= e^{-2\omega}\left[(\pi - \theta_0)K_{\Sigma} - \cot(\theta_0)(\eta_M^2 + \eta_N^2) + 2\csc(\theta_0)\eta_M\eta_N - \frac{1}{3}(\nu_MH_M + \nu_NH_N)\right.\\
		&\quad+ \cot(\theta_0)\eta_M\nu_M\omega - \cot(\theta_0)\nu_M(\omega)^2 + \cot(\theta_0)\eta_N\nu_N\omega - \cot(\theta_0)\nu_N(\omega)^2\\
		&\quad- \csc(\theta_0)\eta_M\nu_N\omega
		- \csc(\theta_0)\eta_N\nu_M\omega + 2\csc(\theta_0)\nu_M(\omega)\nu_N(\omega) + (\theta_0 - \pi)\Delta_k\omega\\
		&\quad+ \frac{1}{3}H_M\nu_M\omega + \nu_M\mu_M\omega - \csc(\theta_0)\nu_M(\omega)\nu_N(\omega) + \cot(\theta_0)\nu_M(\omega)^2\\
		&\quad + \frac{1}{3}H_N\nu_N\omega + \nu_N\mu_N\omega - \csc(\theta_0)\nu_N(\omega)\nu_M(\omega) + \cot(\theta_0)\nu_N(\omega)^2\\
		&= e^{-2\omega}\left( U + P_2^b\omega \right).
	\end{align*}
	It is interesting in the above to note that nonlinear terms coming from the ``Green terms'' $\nu_MH_M + \nu_NH_N$ in $U$ precisely cancel those arising from the Allendoerfer-Weil formula.

	We now need show only (\ref{ptranseq}). This will follow from (\ref{vtranseq}) and the linearity of $P_2^b$, as in proposition 3.1 of \cite{cq97i}. Let $\phi \in C^{\infty}(X)$.
	Suppose $\hat{g} = e^{2(\omega + \phi)}g = e^{2\phi}\tilde{g}$. Then
	\begin{align*}
		P_2^b(\omega + \phi) + U &= e^{2(\omega + \phi)}\widehat{U}\\
		\widetilde{P}_{2}^b\phi + \widetilde{U} &= \widehat{U}e^{2\phi}
	\end{align*}
	Multiply the second equation by $e^{2\omega}$ and subtract to get
	\begin{equation*}
		P_2^b(\omega + \phi) + U - e^{2\omega}\widetilde{P}_2^b\phi - e^{2\omega}\widetilde{U} = 0.
	\end{equation*}
	The result now follows by (\ref{vtranseq}) and linearity.
\end{proof}

\textbf{Acknowledgments} I am indebted to Alice Chang, Paul Yang, Matthew Gursky, and Baris Coskunuzer for helpful conversations. This work was carried out at Princeton University and the University of Texas at Dallas, and I thank both
institutions for their support and for the excellent environments for doing math that they have provided. The work at Princeton was supported also by NSF RTG DMS-1502525.

\bibliographystyle{alpha}
\bibliography{norm}

\end{document}